\documentclass[11pt,letterpaper]{article}
\usepackage{amsthm}
\usepackage{amsmath}
\usepackage{amsfonts}
\usepackage{latexsym}
\usepackage{geometry}
\usepackage{enumerate}
\usepackage{csquotes}

\tolerance=1
\emergencystretch=\maxdimen
\hyphenpenalty=10000
\hbadness=10000

\title{On three-dimensional Type I $\kappa$-solutions to the Ricci flow}
\author{Yongjia Zhang}
\pagestyle{headings}
\begin{document}
\maketitle

In this short note, we prove that the only simply connected noncompact three-dimensional Type I $\kappa$-solution to the Ricci flow is the shrinking cylinder. This work can be regarded as a generalization of Cao, Chow, and Zhang \cite{cao2016three}, and a complement of Ding \cite{ding2009remark} and Ni \cite{ni2009closed}. Up to this point, three-dimensional $\kappa$-solutions of Type I are completely classified, and it remains interesting to work further towards Perelman's assertion, that the only remaining possibility of three-dimensional noncompact $\kappa$-solution is the Bryant soliton; see \cite{perelman2002entropy}. Brendle \cite{brendle2013rotational} is working to that end. The classification of three-dimensional $\kappa$-solution is of importance to the study of four-dimensional Ricci flows, because of a possible dimension-reduction procedure.
\\

We remind the reader of the following definition.

\newtheorem{Definition_1}{Definition}
\begin{Definition_1}\label{Definition_1}
An ancient solution to the Ricci flow $(M,g(t))_{t\in(-\infty,0]}$ is called a $\kappa$-solution if it is $\kappa$-noncollapsed on all scales and has bounded curvature on every time slice. A $\kappa$-solution is called Type I if its Riemann curvature tensor satisfies
\begin{eqnarray} \label{Type I}
|Rm|(g(t))\leq\frac{C}{|t|},
\end{eqnarray}
for all $t\in(-\infty,0)$, where $C$ is a constant that does not depend on $t$.
\end{Definition_1}

It is well-known that every three-dimensional $\kappa$-solution has uniformly bounded and nonnegative sectional curvature.
\\

Our main theorem is the following.

\newtheorem{Main_Theorem}[Definition_1]{Theorem}
\begin{Main_Theorem} \label{Main_Theorem}
The only three-dimensional simply connected noncompact Type I $\kappa$-solution is the shrinking cylinder.
\end{Main_Theorem}

\bigskip

It is worth mentioning that Ni \cite{ni2009closed} has proved that a closed Type I $\kappa$-solution with positive curvature operator of every dimension is a shrinking sphere or one of its quotients. On the other hand, Theorem 2.4 in Ding \cite{ding2009remark} implies that the only simply connected noncompact $\kappa$-solution that forms a \textit{forward} singularity of Type I is the shrinking cylinder, Cao, Chow, and Zhang \cite{cao2016three} gave an alternative proof with an additional assumption of backward Type I. Furthermore, the author would like to draw the readers' attention to Hallgren \cite{hallgren2018nonexistence}, who also classified three-dimensional Type I $\kappa$-solution to the Ricci flow independently, through a more direct approach.
\\

We recall the notion of an $\varepsilon$-neck.

\newtheorem{Definition_2}[Definition_1]{Definition}
\begin{Definition_2}\label{Definition_2}
A space-time point $(x_0,t_0)$ in a Ricci flow $(M,g(t))$ is called the center of an $\varepsilon$-neck, where $\varepsilon>0$, if the Ricci flow $g(t)$ on the space-time neighbourhood $B_{g(t_0)}(x_0,\varepsilon^{-1}R(x_0,t_0)^{-\frac{1}{2}})\times[t_0-R(x_0,t_0)^{-1},t_0]$ is, after parabolic rescaling by the factor $R(x_0,t_0)$, $\varepsilon$-close in the $C^{\lfloor\frac{1}{\varepsilon}\rfloor}$-topology to the corresponding part of a standard shrinking cylinder, or in other words, if there exist diffeomorphisms $\phi_t:\mathbb{S}^2\times(-\varepsilon^{-1},\varepsilon^{-1})\rightarrow\\ B_{g(t_0)}(x_0,\varepsilon^{-1}R(x_0,t_0)^{-\frac{1}{2}})$, such that
\begin{eqnarray*}
&\phi_t^{-1}(x_0)\in\mathbb{S}^2\times\{0\},
\\
&\Big|R(x_0,t_0)\phi_t^*g(t_0+tR(x_0,t_0)^{-1})-g_{cyl}(t)\Big|_{C^{\lfloor\frac{1}{\varepsilon}\rfloor}(\mathbb{S}^2\times(-\varepsilon^{-1},\varepsilon^{-1}))} <\varepsilon,
\end{eqnarray*}
for any $t\in[-1,0]$. Here the notation $B_{g(t_0)}(x_0,r)$ stands for the geodesic ball centered at $x_0$, with radius $r$, and with respect to the metric $g(t_0)$, and $g_{cyl}(t)$ represents the standard shrinking metric on $\mathbb{S}^2\times\mathbb{R}$ with $R(g_{cyl}(0))\equiv 1$.
\end{Definition_2}

We remark here that in the above definition, after parabolic scaling, the space-time neighbourhood $\displaystyle B_{g(t_0)}(x_0,\varepsilon^{-1}R(x_0,t_0)^{-\frac{1}{2}})\times[t_0-R(x_0,t_0)^{-1},t_0]$ has time expansion $1$, and the scalar curvature at $(x_0,t_0)$ is normalized to be $1$. This definition is called the strong $\varepsilon$-neck by Perelman \cite{perelman2002entropy}, whereas we keep consistency with the definition in Kleiner-Lott \cite{kleiner2014singular} and call it an $\varepsilon$-neck.
\\

The following neck stability theorem by Kleiner and Lott is of fundamental importance to our proof. Please refer to Theorem 6.1 in \cite{kleiner2014singular}.

\newtheorem{Neck_Stability}[Definition_1]{Theorem}
\begin{Neck_Stability}\label{Neck_Stability}
For any $\kappa>0$, there exists a constant $\delta=\delta(\kappa)>0$, such that for all $\delta_0$, $\delta_1\leq\delta$, there is a $T=T(\delta_0,\delta_1,\kappa)\in(-\infty,0)$, with the following property. Let $(M^3,g(t))_{t\in(-\infty,0]}$ be a noncompact three-dimensional $\kappa$-solution to the Ricci flow that is not the $\mathbb{Z}_2$-quotient of the shrinking cylinder. Let $(x_0,0)\in M\times\{0\}$ be such that $R(x_0,0)=1$. If $(x_0,0)$ is the center of an $\delta_0$-neck, then for all $t\leq T$, $(x_0,t)$ is the center of a $\delta_1$-neck.
\end{Neck_Stability}

\bigskip

For the remaining of this paper, we fixed a small positive constant $\displaystyle\varepsilon <\min\left\{\frac{1}{100},\delta(\kappa),\varepsilon_0(\kappa)\right\}$, where $\delta(\kappa)$ is defined in Theorem \ref{Neck_Stability}, and $\varepsilon_0$ is the constant given in Corollary 48.1 of Kleiner and Lott \cite{kleiner2008notes}. With such $\varepsilon$ we are guaranteed that the $\varepsilon$-canonical neighbourhood property holds for all $\kappa$-solutions of dimension three. We will use this $\varepsilon$ as the small positive constant in the definition of the $\varepsilon$-neck.
\\

The following lemma is inspired by Ding \cite{ding2009remark} and Ni \cite{ni2009closed}.

\newtheorem{Lemma}[Definition_1]{Lemma}
\begin{Lemma}\label{Lemma}
Let $(M^3,g(t))_{t\in(-\infty,0]}$ be a three-dimensional noncompact Type I $\kappa$-solution with strictly positive sectional curvature on every time slice. Let $p_0$ be an arbitrary fixed point on $M$. Then for every instance $t\in(-\infty,0]$, there exists a point $p(t)\in M$ such that $(p(t),t)$ is \textbf{\emph{not}} the center of a $\varepsilon$-neck. Moreover, $\displaystyle dist_{g(0)}(p_0,p(t))\rightarrow\infty$ as $t\rightarrow-\infty$.
\end{Lemma}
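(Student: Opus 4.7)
The plan splits into two parts, matching the two assertions. For the existence of $p(t)$ at each time, I first note that by the Gromoll--Meyer theorem, a complete noncompact three-manifold of strictly positive sectional curvature is diffeomorphic to $\mathbb{R}^3$, so $M$ is simply connected with $H_2(M) = 0$. By the $\varepsilon$-canonical neighbourhood property for three-dimensional $\kappa$-solutions (valid for our choice of $\varepsilon$), every space-time point is either the center of an $\varepsilon$-neck, lies in an $\varepsilon$-cap, or belongs to a closed positively curved component; the last is excluded by noncompactness. If every point at time $t$ were a neck center, the local $S^2$-foliations would patch into a global sphere bundle over a $1$-manifold, giving $M \cong S^2 \times \mathbb{R}$, which has $H_2 \neq 0$---a contradiction. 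Thus some $p(t)$ fails to be a neck center, lying in an $\varepsilon$-cap.

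The divergence $d_{g(0)}(p_0, p(t)) \to \infty$ is the substantive part; I plan a contradiction via parabolic blow-down based at $p_0$. Suppose a subsequence $t_k \to -\infty$ satisfies $d_{g(0)}(p_0, p(t_k)) \leq D$ for some constant $D$. Define the rescaled flows $\tilde g_k(s) := |t_k|^{-1} g(|t_k| s)$; the Type I bound transfers to $|\widetilde{\mathrm{Rm}}|_{\tilde g_k} \leq C/|s|$, and $\kappa$-noncollapsing is scale-invariant. Hamilton's compactness theorem then yields, on a subsequence, a smooth pointed Cheeger--Gromov--Hamilton limit $(\tilde M_\infty, \tilde g_\infty, p_{0, \infty})$. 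Perelman's reduced-volume monotonicity together with the Type I hypothesis identifies this limit as a nonflat gradient shrinking Ricci soliton with bounded nonnegative sectional curvature. An open-and-closed argument rules out compactness of $\tilde M_\infty$: a smooth embedding of closed $\tilde M_\infty$ into connected $M$ would be both open and compact, hence all of $M$, contradicting noncompactness. The classification of three-dimensional $\kappa$-noncollapsed noncompact nonflat gradient shrinking Ricci solitons with bounded nonnegative sectional curvature then leaves $\tilde M_\infty$ to be the shrinking round cylinder or its $\mathbb{Z}_2$-quotient.

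To complete the contradiction I would track $p(t_k)$ under the blow-down. Perelman's distance-distortion estimate combined with $|\mathrm{Rm}|(g(t)) \leq C/|t|$ gives $\tfrac{d}{dt} d_{g(t)}(p_0, p(t_k)) \geq -c/\sqrt{|t|}$, integrating to $d_{g(t_k)}(p_0, p(t_k)) \leq D + 2c\sqrt{|t_k|}$. After rescaling, $d_{\tilde g_k(-1)}(p_0, p(t_k)) \leq 2c + D|t_k|^{-1/2}$ is uniformly bounded; passing to a further subsequence, $p(t_k)$ converges via the Cheeger--Gromov diffeomorphisms to a point $q_\infty \in \tilde M_\infty$. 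On the cylinder (or its $\mathbb{Z}_2$-quotient) every point is the center of an arbitrarily sharp strong neck, so smooth convergence of $\tilde g_k \to \tilde g_\infty$ on compact spacetime sets forces $p(t_k)$ to be the center of an $\varepsilon$-neck in $\tilde g_k$ at $s = -1$ for large $k$. By scale-invariance of the neck condition, $(p(t_k), t_k)$ is then the center of an $\varepsilon$-neck in the original flow, contradicting the choice of $p(t_k)$.

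The main obstacle is the blow-down step: one must identify the limit as a \emph{shrinker} (not merely an ancient flow) and exclude compactness, invoking both the Type I hypothesis and the rigidity of three-dimensional $\kappa$-noncollapsed shrinking Ricci solitons with nonnegative curvature. The remaining ingredients---the transfer of Type I to the rescaled flow, the distance-distortion bookkeeping that captures $p(t_k)$ in the limit, and the topological argument for existence of $p(t)$---are comparatively routine.
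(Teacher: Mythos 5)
Your proposal is correct in substance but organizes the blow-down step differently from the paper, and the comparison is instructive. The paper blows down directly at the moving points: it bounds $d_{g(t_i)}(p_0,p(t_i))\leq C_2\sqrt{|t_i|}+C_1$ via distance distortion, feeds this into Naber's estimate (Proposition 2.2 in \cite{naber2010noncompact}) to get a uniform bound on the reduced distance $l_{(p_0,0)}(p(t_i),t_i)$, and then invokes Perelman's $\mathcal{L}$-geometry to extract a subsequence of $(M,|t_i|^{-1}g(|t_i|t),(p(t_i),-1))$ converging to a nonflat shrinking soliton, which by classification and the $\mathbb{R}^3$ topology must be the cylinder---the contradiction is then immediate at the base point, with no tracking needed. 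You instead base the blow-down at the fixed point $p_0$, identify the limit as a shrinker via Hamilton compactness plus reduced-volume monotonicity (implicitly the Type-I-at-a-fixed-point version of the asymptotic shrinker, as in Naber or Enders--M\"uller--Topping), and then use the same distance-distortion inequality to track $p(t_k)$ to a limit point $q_\infty$ in the cylinder. Both routes use the same distance estimate; the paper spends it on bounding the reduced distance at $(p(t_i),t_i)$, while you spend it on locating $p(t_k)$ after the fact. Two small things to tighten: (i) your remark that ``on the cylinder (or its $\mathbb{Z}_2$-quotient) every point is the center of an arbitrarily sharp strong neck'' is false for the $\mathbb{Z}_2$-quotient near its nonorientable core; you should instead rule out the quotient outright, since its large balls are nonorientable and therefore cannot embed in $M\cong\mathbb{R}^3$ via the Cheeger--Gromov diffeomorphisms (the paper handles this with the one-line observation that only the cylinder can arise as a limit of flows on $\mathbb{R}^3$). (ii) Because the $\varepsilon$-neck condition looks backward in time on an interval of length $R^{-1}$, you need the rescaled sequence to converge on a time interval extending below $s=-1$; the paper is explicit about taking $[-2,-1]$ rather than Perelman's $[-1,-\tfrac12]$ precisely for this reason, and you should make the analogous remark in your version.
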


\begin{proof}
First of all, such $p(t)$ must exists for every $t\in(-\infty,0]$. We know from the Gromoll-Meyer theorem that $M$ is diffeomorphic to $\mathbb{R}^3$. By Corollary 48.1 in Kleiner and Lott \cite{kleiner2014singular}, such ancient solution must fall into category $B$, on which there is always a cap (the so-called $M_\varepsilon$). In particular, since $M$ is diffeomorphic to $\mathbb{R}^3$, the cap is topologically a disk instead of $\mathbb{R}P^3\setminus B^3$.
\\

Assume by contradiction that there exists $\{t_i\}_{i=1}^\infty\subset(-\infty,0)$, such that $t_i\searrow-\infty$ but $\displaystyle dist_{g(0)}(p_0,p(t_i))\leq C_1$, where $C_1$ is a constant. We prove the following claim.

\newtheorem*{Claim}{Claim}
\begin{Claim}
There exists a constant $C_2<\infty$, such that
\begin{eqnarray}
dist_{g(t_i)}(p_0,p(t_i))\leq C_2\sqrt{|t_i|} +C_1,
\end{eqnarray}
for every $i$.
\end{Claim}
\begin{proof}[Proof of the Claim]
We recall Perelman's distance distortion estimate \cite{perelman2002entropy}. Suppose on $t_0$-slice of a Ricci flow, around two points $x_0$, $x_1$ that are not too close to each other, the Ricci curvature tensor is bounded from above, that is, if for some $r>0$, $dist_{g(t_0)}(x_0,x_1)\geq 2r$ and $Ric\leq (n-1)K$ on $B_{g(t_0)}(x_0,r)\bigcup B_{g(t_0)}(x_1,r)$, then we have
\begin{eqnarray} \label{Distortion}
\frac{d}{dt}dist_{g(t)}(x_0,x_1)\geq -2(n-1)\left(\frac{2}{3}Kr+r^{-1}\right)
\end{eqnarray}
at time $t=t_0$. Applying the curvature bound (\ref{Type I}) and $r=|t|^{\frac{1}{2}}$ to (\ref{Distortion}), we have
\begin{eqnarray} \label{distortion}
\frac{d}{dt}dist_{g(t)}(p_0,p(\tau_i))\geq -4 \left(C+1\right)|t|^{-\frac{1}{2}},
\end{eqnarray}
for every $i$, whenever $\displaystyle dist_{g(t)}(p_0,p(t_i))>2|t|^{\frac{1}{2}}$. Integrating (\ref{distortion}) from $0$ to $t_i\in(-\infty,0)$ completes the proof of the claim.
\end{proof}

Now we recall Perelman's reduced distance function $l_{(p_0,0)}(p,t)$ centered at $(p_0,0)$ and evaluated at $(p,t)$; see \cite{perelman2002entropy}. By the estimate of Naber (see Proposition 2.2 in \cite{naber2010noncompact}), we have that $l_{(p_0,0)}(p(t_i),t_i)<C_3$, where $C_3<\infty$ is a constant. It follows from Perelman \cite{perelman2002entropy} that there exists a subsequence of $\displaystyle \{(M,|t_i|^{-1}g(|t_i|t),(p(t_i),-1))_{t\in[-2,-1]}\}_{i=1}^\infty$ that converges in the pointed smooth Cheeger-Gromov sense to the canonical form of a nonflat shrinking gradient Ricci soliton; see Morgan and Tian \cite{morgan2007ricci} and Naber \cite{naber2010noncompact} for details. Notice that the time interval of these scaled flows are taken as $[-1,-\frac{1}{2}]$ in Perelman's argument, whereas we take the interval to be $[-2,-1]$, so as to keep consistency with the definition of the $\varepsilon$-neck. This is valid because $\displaystyle\sup_{t\in[2t_i,t_i]}l_{(p_0,0)}(p(t_i),t)$ is bounded uniformly. One may easy verify this bound by using Perelman's differential inequalities for the reduced distance. The only nonflat three-dimensional shrinking gradient Ricci solitons are the shrinking sphere, the shrinking cylinder, and their quotients; see Perelman \cite{perelman2003ricci}. The limit shrinking gradient Ricci soliton cannot be flat, since otherwise Perelman's reduced volume is equal to $1$ for all time and the Ricci flow is flat; see \cite{yokota2009perelman}. The shrinking cylinder is the only one that can arise as the limit of a sequence of Ricci flows that are diffeomorphic to $\mathbb{R}^3$. However, this yields a contradiction, as we have assumed that $(p(t_i),t_i)$ is not the center of an $\varepsilon$-neck.

\end{proof}

\bigskip

We are now ready to present the proof of our main theorem.

\begin{proof}[Proof of Theorem \ref{Main_Theorem}]
If $g(t)$ has zero sectional curvature somewhere in space-time, by Hamilton's strong maximum principle \cite{hamilton1986four}, $g(t)$ also has zero sectional curvature everywhere in space at more ancient times, and hence splits locally. Since we assume $M$ to be simply connected, it must be the shrinking cylinder. Therefore henceforth we assume that $g(t)$ has strictly positive curvature on every time slice.
\\

We fixed an arbitrary time sequence $\{t_i\}_{i=1}^\infty\subset(-\infty,0)$ such that $t_i\searrow-\infty$. For every $i$, let $p_i\in M$ be such that $(p_i,t_i)$ is \textbf{\textit{not}} the center of an $\varepsilon$-neck. By Lemma \ref{Lemma}, we have that $\displaystyle dist_{g(0)}(p_i,p_0)\rightarrow\infty$. Since by Perelman \cite{perelman2002entropy} that every three-dimensional noncompact $\kappa$-solution splits as a shrinking cylinder at spacial infinity, we can extract from $\{(M,R(p_i,0)g(t R(p_i,0)^{-1}),(p_i,0))_{t\in(-\infty,0]}\}_{i=1}^\infty$ a (not relabelled) subsequence that converges in the smooth Cheeger-Gromov sense to the shrinking round cylinder. For the sake of simplicity we denote $g_i(t):=R(p_i,0)g(t R(p_i,0)^{-1})$. It follows that for ever $i$ large, $(p_i,0)$ is the center of an $\varepsilon$-neck. The following claim is an easy consequence of Theorem \ref{Neck_Stability}.
\\

\newtheorem{Claim_1*}{Claim}
\begin{Claim}
\begin{eqnarray} \label{tau_i}
\bar{t}_i:=t_iR(p_i,0)\geq T,
\end{eqnarray}
for all large $i$.  Where $T:=T(\varepsilon,\varepsilon,\kappa)\in(-\infty,0)$ is defined in Theorem \ref{Neck_Stability}.
\end{Claim}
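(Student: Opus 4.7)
The plan is to apply Theorem \ref{Neck_Stability} to the parabolically rescaled flow $g_i$ and then translate the conclusion back via the scale invariance of being the center of an $\varepsilon$-neck.

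First I would verify that $(M, g_i(t))_{t\in(-\infty,0]}$ is itself a three-dimensional simply connected noncompact $\kappa$-solution that is not the $\mathbb{Z}_2$-quotient of the shrinking cylinder. Parabolic rescaling preserves ancientness, $\kappa$-noncollapsing on all scales, uniform bounds on the curvature, the underlying diffeomorphism type (hence simple connectivity), and the strict positivity of sectional curvature (so $(M,g_i)$ cannot be a $\mathbb{Z}_2$-quotient of the cylinder). By construction $R_{g_i}(p_i,0)=1$, and the convergence established in the paragraph immediately preceding the claim ensures that $(p_i,0)$ is the center of an $\varepsilon$-neck in $g_i$ for every sufficiently large $i$.

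Next I would apply Theorem \ref{Neck_Stability} to $g_i$ with $\delta_0=\delta_1=\varepsilon$, which is admissible since $\varepsilon\le\delta(\kappa)$ by the standing choice. The theorem yields a time $T=T(\varepsilon,\varepsilon,\kappa)\in(-\infty,0)$, independent of $i$, such that $(p_i,t)$ is the center of an $\varepsilon$-neck in $g_i$ for every $t\le T$.

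To close the argument, I would use that the property of being the center of an $\varepsilon$-neck is invariant under parabolic rescaling and that the space-time point $(p_i,t_i)$ in the original flow $g$ corresponds exactly to $(p_i,\bar{t}_i)$ in the rescaled flow $g_i$. Since $(p_i,t_i)$ was chosen so as \emph{not} to be the center of an $\varepsilon$-neck in $g$, the point $(p_i,\bar{t}_i)$ is likewise not an $\varepsilon$-neck center in $g_i$; combined with the previous step this forces $\bar{t}_i>T$, and in particular $\bar{t}_i\ge T$. There is no real obstacle here; the only thing to be careful about is the bookkeeping in transferring the hypotheses of Theorem \ref{Neck_Stability} to $g_i$, which is routine.
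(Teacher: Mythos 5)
Your proof is correct and is essentially the paper's argument, merely phrased contrapositively (directly concluding $\bar{t}_i>T$) rather than as a proof by contradiction with a passage to a subsequence; you are also slightly more explicit about checking that the rescaled flows $g_i$ satisfy the hypotheses of Theorem~\ref{Neck_Stability}, which the paper leaves implicit.
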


\begin{proof}[Proof of the claim]
Suppose the claim is not true, by passing to a subsequence, we can assume $\bar{t}_i=t_iR(p_i,0)< T$ for all $i$. We consider the scaled Ricci flows $g_i(t)$, and  apply Theorem \ref{Neck_Stability} to elements in $\{(M,g_i(t),(p_i,0))_{t\in(-\infty,0]}\}_{i=1}^\infty$. First of all we have that $R_i(p_i,0)=1$ because of the scaling factors that we chose. Moreover, forasmuch as $\{(M,g_i(t),(p_i,0))_{t\in(-\infty,0]}\}_{i=1}^\infty$ converges to the shrinking cylinder, we have that $(p_i,0)$ is the center of an $\varepsilon$-neck when $i$ is large. It follows that $(p_i,\bar{t}_i)$ is the center of an $\varepsilon$-neck when $i$ is large. However, $g_i(\bar{t}_i)=R(p_i,0)g(\bar{t}_i R(p_i,0)^{-1})=R(p_i,0)g(t_i)$. By our assumption, on the original Ricci flow $g(t)$ the space-time point $(p_i,t_i)$ is not the center of an $\varepsilon$-neck; this is a contradiction. Notice here that the $\varepsilon$-necklike property is scaling invariant.
\end{proof}
\bigskip

We continue the proof of the theorem. In the following argument we consider the scaled Ricci flows $g_i(t)$, notice that by our assumption for every $i$ the space-time point $(p_i,\bar{t}_i)$ is not the center of an $\varepsilon$-neck, where $\bar{t}_i$ is defined as (\ref{tau_i}). Since the limit of the sequence $\{(M,g_i(t),(p_i,0))_{t\in(-\infty,0]}\}_{i=0}^\infty$ is exactly a shrinking round cylinder, we have that for every large $A\in[4|T|,\infty)$, $(B_{g_i(0)}(p_i,A),g_i(t))_{\tau\in[T-A,0]}$ is as close as we like to the correspondent piece of the shrinking cylinder when $i$ is large enough. In particular, $(p_i,\bar{t}_i)$ is the center of an $\varepsilon$-neck since $\bar{t}_i\in[T,0]$ according to the claim; this is a contradiction. Here we have again taken into account the scaling invariance of the $\varepsilon$-necklike property.

\end{proof}

\bibliographystyle{plain}
\bibliography{citation}

\noindent Department of Mathematics, University of California, San Diego, CA, 92093
\\ E-mail address: \verb"yoz020@ucsd.edu"

\end{document}